\theoremstyle{plain}
\newtheorem{thm}{Theorem}[section]
\newtheorem{lem}[thm]{Lemma}
\newtheorem{prop}[thm]{Proposition}
\newtheorem{prb}[thm]{Problem}
\newtheorem{con}[thm]{Conjecture}
\numberwithin{equation}{section}
\newcommand{\N}{\ensuremath{\mathbb N}}
\newcommand{\Z}{\ensuremath{\mathbb Z}}
\DeclareMathOperator{\h}{H}
\DeclareMathOperator{\ab}{ab}
\DeclareMathOperator{\GL}{GL}
\DeclareMathOperator{\PG}{P\Gamma}
\DeclareMathOperator{\SL}{SL}
\DeclareMathOperator{\PSL}{PSL}
\DeclareMathOperator{\Aut}{Aut}
\DeclareMathOperator{\IA}{IA}
\begin{document}
\date\today
\title{On the Abelianization of\\Congruence Subgroups of $\Aut(F_2)$}
\thanks{The author would like to thank F.~Grunewald for proposing the topic and B.~Klopsch for many helpful discussions.}
\author{Daniel Appel}
\address{Department of Mathematics, Royal Holloway College, University of London, Egham, Surrey, TW20 0EX,
United Kingdom.}
\curraddr{Mathematisches Institut der Heinrich- Heine- Universit\"at\\ 40225 D\"usseldorf\\ Germany.}
\email{Daniel.Appel@uni-duesseldorf.de}

\maketitle
\pagestyle{plain}
\begin{abstract}
Let $F_n$ be the free group of rank $n$ and let $\Aut^+(F_n)$ be its special automorphism group. For an epimorphism $\pi : F_n \rightarrow G$ of the free group $F_n$ onto a finite group $G$ we call $\Gamma^+(G,\pi)=\{ \varphi \in \Aut^+(F_n) \mid \pi\varphi~=~\pi \}$ the standard congruence subgroup of $\Aut^+(F_n)$ associated to $G$ and $\pi$.
In the case $n = 2$ we fully describe the abelianization of $\Gamma^+(G,\pi)$ for finite abelian groups $G$. Moreover, we show that if $G$ is a finite non-perfect group, then $\Gamma^+(G,\pi) \leq \Aut^+(F_2)$ has infinite abelianization.
\end{abstract}

\section{Introduction}
\subsection{Main Results}
Let $F_n$ be the free group on $n$ generators and $\Aut(F_n)$ its
group of automorphisms. Moreover, let $\pi: F_n \rightarrow G$ be an
epimorphism of $F_n$ onto a finite group $G$. The automorphism group $\Aut(F_n)$ acts in a natural way on the (finite) set of all epimorphisms from $F_n$ onto $G$. By 
$$
\Gamma(G,\pi) := \{ \varphi \in \Aut(F_n) \mid  \pi \varphi =\pi\}
$$
we denote the stabilizer of $\pi$ under this action. The group $\Gamma(G,\pi)$ is called the \emph{standard congruence subgroup of $\Aut(F_n)$ associated to
$G$ and~$\pi$}. This is a finite index subgroup of $\Aut(F_n)$. A subgroup of $\Aut(F_n)$ containing some $\Gamma(G,\pi)$ is called a \emph{congruence subgroup of~$\Aut(F_n)$}. 

A classical question is whether every finite index subgroup of $\Aut(F_n)$ is a congruence subgroup. Quite recently it has been shown that every finite index subgroup of $\Aut(F_2)$ is a congruence subgroup. See~\cite{MA} and~\cite{BER}. For $n \geq 3$ this question is still open.

The groups $\Gamma(G,\pi)$ have been studied by various authors. For instance, in~\cite{GL}, F.~Grunewald and A.~Lubotzky use the groups $\Gamma(G,\pi)$ to construct linear representations of the automorphism group $\Aut(F_n)$. Our work is related to results of T.~Satoh~\cite{SA1, SA2}, see also Section~\ref{comments}. The joint work~\cite{AR} of E.~Ribnere and the author can be seen as an accompanying paper to the present one.

The automorphism group $\Aut(F_n)$ has  a well-known representation onto $\GL_n(\Z)$ given by
$$
\rho : \Aut(F_n) \longrightarrow \Aut(F_n / F'_n) \cong \GL_n(\Z),
$$
where $F'_n$ denotes the commutator subgroup of $F_n$, see~\cite[Sec.~3.6]{MKS} for details.  Its kernel is denoted by $\IA_n$ and called the
\emph{group of $\IA_n$-automorphisms} or sometimes also the \emph{classical Torelli group}. For an interesting generalization see~\cite{MS}.

As one classically considers $\SL_n(\Z)$ instead of $\GL_n(\Z)$, we focus on the \emph{special automorphism group} $\Aut^+(F_n) := \rho^{-1}(\SL_n(\Z))$, which is a subgroup of index~$2$ in $\Aut(F_n)$. We set $\Gamma^+(G,\pi) := \Gamma(G,\pi) \cap \Aut^+(F_n)$. This is a subgroup of index at most~$2$ in $\Gamma(G,\pi)$. The term \emph{congruence subgroup of} $\Aut^+(F_n)$ is defined in the obvious way.

In this paper we study the abelianizations $\Gamma^+(G,\pi)^{\ab}$ of the groups $\Gamma^+(G,\pi)$ in the case $n = 2$. We remark that if $G$ is abelian, then, up to conjugation, $\Gamma^+(G,\pi)$ depends only on $G$ but not on the particular epimorphism $\pi:F_2 \rightarrow G$, see \cite[Lem.~3.1]{AR}. Moreover, every finite abelian group generated by two elements can be written as $\Z/m\Z \times \Z/n\Z$ where $n,m \in \N$ such that $n \mid m$. The following theorem therefore covers all possible choices for an epimorphism $\pi: F_2 \rightarrow G$ onto a finite abelian group. 

\begin{thm}\label{main1}
Let $m, n \in \N$ such that $m \geq 3$, $n \mid m$ and $(m,n) \not= (3,1)$. Let $G:=\Z/m\Z \times \Z/n\Z$ and $\pi: F_2 \rightarrow G$ be an epimorphism. Then
$$ \Gamma^+(G, \pi)^{\ab} \cong G \times \Z^{ 1 + 12^{-1}nm^2\prod_{p\mid m}(1-p^{-2}) }$$
where the product runs over all primes $p$ dividing $m$.

Furthermore, we have
\begin{align*}
\Gamma^+(\Z / 2\Z,\pi)^{\ab} &\cong \Z / 2\Z \times \Z / 4\Z \times \Z,\\
\Gamma^+(\Z / 3\Z,\pi)^{\ab} &\cong \Z / 3\Z \times \Z / 3\Z \times \Z,\\
\Gamma^+(\Z / 2\Z \times \Z / 2\Z,\pi)^{\ab} &\cong \Z / 2\Z \times \Z / 2\Z \times \Z / 2\Z \times \Z^2.
\end{align*}
\end{thm}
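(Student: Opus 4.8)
The plan is to compare $\Gamma^+(G,\pi)$ with a congruence subgroup of $\SL_2(\Z)$ and then run the Lyndon--Hochschild--Serre five-term sequence. First I would record the classical fact (Nielsen) that for rank two the Torelli group is just the inner automorphism group, $\IA_2=\Inn(F_2)$, and that $\Inn(F_2)\cong F_2$ because $F_2$ has trivial center. Since $G$ is abelian, conjugation acts trivially on $G$, so every inner automorphism fixes $\pi$; and every inner automorphism lies in $\Aut^+(F_2)$ because it acts trivially on $F_2/F_2'$. Hence $\Inn(F_2)=\ker\rho\cap\Gamma^+(G,\pi)\le\Gamma^+(G,\pi)$. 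Let $\bar\pi\colon\Z^2=F_2/F_2'\to G$ be induced by $\pi$ and put $\Delta:=\rho(\Gamma^+(G,\pi))$. Because $\pi\varphi=\bar\pi\circ\rho(\varphi)\circ(\mathrm{ab})$, one checks that $\varphi$ fixes $\pi$ precisely when $\rho(\varphi)$ fixes $\bar\pi$; as $\rho$ maps $\Aut^+(F_2)$ onto $\SL_2(\Z)$ and any two lifts of a matrix differ by an inner automorphism, this yields
$$\Delta=\{A\in\SL_2(\Z)\mid \bar\pi\circ A=\bar\pi\}\quad\text{and}\quad 1\longrightarrow F_2\longrightarrow\Gamma^+(G,\pi)\stackrel{\rho}{\longrightarrow}\Delta\longrightarrow1.$$
Choosing $\bar\pi$ to be the standard projection $\Z^2\to\Z/m\Z\times\Z/n\Z$ identifies $\Delta$ with the congruence subgroup of all $A=\left(\begin{smallmatrix}a&b\\c&d\end{smallmatrix}\right)\in\SL_2(\Z)$ satisfying $a\equiv1,\ b\equiv0\pmod m$ and $c\equiv0,\ d\equiv1\pmod n$.

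Next I would feed this extension into the five-term exact sequence in integral homology,
$$H_2(\Gamma^+(G,\pi))\to H_2(\Delta)\stackrel{d}{\to}(\Z^2)_\Delta\to\Gamma^+(G,\pi)^{\ab}\to\Delta^{\ab}\to0,$$
where $\Delta$ acts on $H_1(F_2)=\Z^2$ through its inclusion into $\SL_2(\Z)$ (this is the conjugation action $\varphi\iota_w\varphi^{-1}=\iota_{\varphi(w)}$ read off on abelianizations). The coinvariants are easy: the submodule generated by all $(A-I)\Z^2$ is exactly $m\Z\oplus n\Z$ — it is contained in $m\Z\oplus n\Z$ by the congruence conditions, and the matrices $\left(\begin{smallmatrix}1&m\\0&1\end{smallmatrix}\right),\left(\begin{smallmatrix}1&0\\n&1\end{smallmatrix}\right)\in\Delta$ produce the generators $(m,0)$ and $(0,n)$ — so $(\Z^2)_\Delta\cong\Z/m\Z\times\Z/n\Z=G$.

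The decisive structural input is that $\Delta$ is torsion-free whenever $m\ge3$ and $(m,n)\ne(3,1)$. Indeed, every element of $\Delta$ is $\equiv\left(\begin{smallmatrix}1&0\\ *&1\end{smallmatrix}\right)\pmod m$, hence has trace $\equiv2\pmod m$; a finite-order element of $\SL_2(\Z)$ has order in $\{1,2,3,4,6\}$, those of order $2,4,6$ have $-I$ as a power (and $-I\in\Delta$ forces $m\mid2$), while order-$3$ elements have trace $-1$, forcing $m\mid3$; the only surviving case $m=3$ is $(m,n)=(3,3)$, where $\Delta=\Gamma(3)$ is torsion-free. A torsion-free finite-index subgroup of the virtually free group $\SL_2(\Z)$ is free, so $H_2(\Delta)=0$ and its rank equals $1-\chi(\Delta)=1+\tfrac1{12}[\SL_2(\Z):\Delta]$, using $\chi(\SL_2(\Z))=-\tfrac1{12}$. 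From $[\SL_2(\Z):\Gamma(m)]=\lvert\SL_2(\Z/m\Z)\rvert=m^3\prod_{p\mid m}(1-p^{-2})$ and $[\Delta:\Gamma(m)]=m/n$ I get $[\SL_2(\Z):\Delta]=nm^2\prod_{p\mid m}(1-p^{-2})$, so $\Delta^{\ab}\cong\Z^{1+\frac1{12}nm^2\prod_{p\mid m}(1-p^{-2})}$. With $H_2(\Delta)=0$ the five-term sequence collapses to $0\to G\to\Gamma^+(G,\pi)^{\ab}\to\Delta^{\ab}\to0$, which splits because $\Delta^{\ab}$ is free abelian; this gives the main formula.

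It remains to treat the three exceptional groups, which are exactly the cases $(m,n)=(2,1),(2,2),(3,1)$ left out above because there $\Delta$ contains torsion (it contains $-I$ for $m=2$, and an order-$3$ element for $(3,1)$). The hard part is precisely here: now $\Delta$ is only virtually free, $H_2(\Delta)$ need not vanish, and neither the connecting map $d$ nor the resulting extension of $\Delta^{\ab}$ by the cokernel of $d$ need be split — the appearance of a factor $\Z/4\Z$ for $G=\Z/2\Z$ signals a genuinely non-split extension. For these finitely many groups I would compute $\Delta$ explicitly as a free product of cyclic groups (via its action on the Bass--Serre tree of $\SL_2(\Z)\cong\Z/4\Z*_{\Z/2\Z}\Z/6\Z$), read off $\Delta^{\ab}$ and $H_2(\Delta)$ from that decomposition, and determine $d$ and the extension class by hand; equivalently one can obtain a presentation of $\Gamma^+(G,\pi)$ directly by Reidemeister--Schreier from a presentation of $\Aut^+(F_2)$ and abelianize. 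Either way the three isomorphism types are pinned down by a finite computation.
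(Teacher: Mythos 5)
Your argument is correct, and it reaches the main formula by a genuinely different route from the paper's. Both proofs rest on the same exact sequence $1 \to \IA_2 \to \Gamma^+(G,\pi) \stackrel{\rho}{\to} \Gamma(m,n) \to 1$ (your $\Delta$ is precisely the paper's $\Gamma(m,n)$, and your identification of kernel and image matches the paper's use of Nielsen's theorem that $\IA_2 = \Inn(F_2)$ is free of rank~$2$). The differences are twofold. First, for the key input that $\Gamma(m,n)$ is free of rank $1+\frac{1}{12}nm^2\prod_{p\mid m}(1-p^{-2})$ (Proposition~\ref{gamma10free}), the paper goes through Frasch's theorem on $\PG(p)$, the Kurosh subgroup theorem for $\PG^1(m)$ with Reidemeister--Schreier computations in the residual cases $m=4,6,9$, and the Schreier formula together with the index formula from~\cite{AR}; you instead prove torsion-freeness directly from the observation that every $A\in\Gamma(m,n)$ has trace $\equiv 2 \bmod m$ (do make explicit that $d\equiv 1 \bmod m$ follows from the determinant condition, since only $d \equiv_n 1$ is imposed), then invoke that a torsion-free finite-index subgroup of the virtually free group $\SL_2(\Z)$ is free, and get the rank from multiplicativity of the Euler characteristic, rederiving the index along the way. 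This is slicker and avoids the small-level case analysis, at the cost of citing Stallings--Swan (or the Bass--Serre tree) and the classical torsion-freeness of $\Gamma(3)$ --- note your trace argument genuinely does not cover $(m,n)=(3,3)$, since $-1\equiv 2 \bmod 3$, so that appeal to the known fact (or a short mod-$9$ computation) is necessary and should be flagged as such. Second, for the abelianization the paper uses P.~Hall's presentation of the free-by-free extension and reduces the relations to $m\overline{\alpha_x}=0$, $n\overline{\alpha_y}=0$ using the same two unipotent matrices $\left(\begin{smallmatrix}1&m\\0&1\end{smallmatrix}\right)$, $\left(\begin{smallmatrix}1&0\\n&1\end{smallmatrix}\right)$ that you use; your five-term sequence with $H_2(\Delta)=0$ and the splitting over the free abelian quotient is an equivalent homological packaging, with your coinvariants computation being literally the paper's relation reduction. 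On the three exceptional cases you are at parity with the paper, which likewise disposes of them only with the words ``by an explicit computation''; your sketch correctly diagnoses why they are exceptional ($\Delta$ has torsion, $H_2(\Delta)$ can be nonzero --- e.g.\ $\Gamma(2)\cong F_2\times \Z/2\Z$ --- and the extension can fail to split, as the $\Z/4\Z$ factor for $G=\Z/2\Z$ shows), and either of your proposed finite computations would pin down the three isomorphism types.
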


\begin{thm}\label{main2}
Let $\pi : F_2 \rightarrow G$ be an epimorphism of $F_2$ onto a finite non-perfect group $G$. Then $\Gamma^+(G,\pi)$ has infinite abelianization.
\end{thm}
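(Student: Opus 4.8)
The plan is to reduce the assertion to the abelian case already settled in Theorem~\ref{main1}. Since $G$ is non-perfect, its abelianization $A := G^{\ab} = G/[G,G]$ is a nontrivial finite abelian group. As $A$ is a quotient of $F_2$, it is generated by at most two elements, and hence $A \cong \Z/m\Z \times \Z/n\Z$ for some $n \mid m$ with $m \geq 2$. Writing $q : G \to A$ for the canonical projection, the composite $\bar\pi := q\pi : F_2 \to A$ is an epimorphism onto $A$. I claim $\Gamma^+(G,\pi) \leq \Gamma^+(A,\bar\pi)$: indeed, if $\varphi \in \Aut^+(F_2)$ satisfies $\pi\varphi = \pi$, then $\bar\pi\varphi = q\pi\varphi = q\pi = \bar\pi$, so $\varphi$ stabilizes $\bar\pi$ as well. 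Since $\Gamma^+(G,\pi)$ and $\Gamma^+(A,\bar\pi)$ both have finite index in $\Aut^+(F_2)$, the inclusion $\Gamma^+(G,\pi) \leq \Gamma^+(A,\bar\pi)$ is of finite index.

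Next I would invoke Theorem~\ref{main1} for the group $A$. Every nontrivial two-generated finite abelian group falls under one of the cases treated there (the generic formula for $m \geq 3$ with $(m,n)\neq(3,1)$, or one of the three exceptional groups $\Z/2\Z$, $\Z/3\Z$, $\Z/2\Z \times \Z/2\Z$), and in each case the free part of $\Gamma^+(A,\bar\pi)^{\ab}$ has rank at least $1$ owing to the summand $\Z^{1 + \cdots}$. Hence $\Gamma^+(A,\bar\pi)^{\ab}$ is infinite; equivalently, writing $\Delta := \Gamma^+(A,\bar\pi)$, we have $H_1(\Delta;\Q) \neq 0$.

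It remains to pass from the finite-index overgroup $\Delta$ down to $\Gamma^+(G,\pi)$, and this is the only step requiring genuine care, since an arbitrary (infinite-index) subgroup of a group with infinite abelianization may well have finite abelianization; finiteness of the index is therefore essential and is exploited through the transfer (Verlagerung). Setting $H := \Gamma^+(G,\pi)$, the transfer $\mathrm{tr} : H_1(\Delta;\Q) \to H_1(H;\Q)$, composed with the map $\iota_* : H_1(H;\Q) \to H_1(\Delta;\Q)$ induced by the inclusion, equals multiplication by the index $[\Delta:H]$, which is invertible over $\Q$. Consequently $\mathrm{tr}$ is injective, and $H_1(\Delta;\Q) \neq 0$ forces $H_1(H;\Q) \neq 0$. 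Since $H$ is finitely generated (being a finite-index subgroup of the finitely generated group $\Aut^+(F_2)$), its abelianization $H^{\ab}$ is a finitely generated abelian group with nonzero rational homology, hence infinite. This shows that $\Gamma^+(G,\pi)$ has infinite abelianization, completing the argument.
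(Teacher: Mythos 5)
Your proposal is correct, but it takes a genuinely different route from the paper's. You reduce to the abelian quotient $A = G^{\ab}$, observe that $\Gamma^+(G,\pi) \leq \Gamma^+(A,\bar\pi)$ with finite index, invoke Theorem~\ref{main1} --- whose conclusion in every case, generic and exceptional alike, contains a $\Z$ summand, so $H_1(\Gamma^+(A,\bar\pi);\Q) \neq 0$ --- and then descend via the transfer: since $\iota_* \circ \mathrm{tr}$ is multiplication by the index, $\mathrm{tr}$ is injective over $\Q$, so the first Betti number cannot drop when passing to a finite-index subgroup; this is a correct and standard argument, and you rightly flag that finiteness of the index is exactly where it is needed. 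The paper goes the other way around: instead of embedding $\Gamma^+(G,\pi)$ into a larger group, it passes to the \emph{image} $\bar\rho(\Gamma^+(G,\pi))$ in $\PSL_2(\Z)$, using that $\Gamma^+(G,\pi)^{\ab}$ surjects onto $\bar\rho(\Gamma^+(G,\pi))^{\ab}$ (no index hypothesis needed for a quotient). The delicate cases there are $G^{\ab} \cong \Z/2\Z$ and $\Z/3\Z$, where the image lies in $\PG^1(2) \cong \Z * \Z/2\Z$ (resp.\ $\Z * \Z/3\Z$), which is not free; the paper handles these via Rademacher's explicit free product decomposition, the Kurosh Subgroup Theorem, and the unipotent element $\left(\begin{smallmatrix} 1 & 0 \\ k & 1 \end{smallmatrix}\right)$ with $k = |G|$, which rules out the image being generated by torsion; in all remaining cases the image has finite index in a free group and the conclusion is immediate. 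What each approach buys: yours is shorter, uniform, and free of case distinctions, but it leans on the full strength of Theorem~\ref{main1}, including the three exceptional abelianizations for $\Z/2\Z$, $\Z/3\Z$ and $(\Z/2\Z)^2$, which the paper settles only by an unexplained explicit computation; the paper's proof of Theorem~\ref{main2} is logically independent of Theorem~\ref{main1} and confronts precisely those exceptional quotients directly. One small simplification to yours: finite generation of $H = \Gamma^+(G,\pi)$ is not actually needed at the end, since $H_1(H;\Q) = H^{\ab} \otimes_{\Z} \Q \neq 0$ already forces $H^{\ab}$ to contain an element of infinite order.
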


For $n,m \in \N$ with $n \mid m$ we define a subgroup of $\SL_2(\Z)$ by
$$\Gamma(m,n) := \{ \left(
\begin{smallmatrix} a & b \\ c & d\end{smallmatrix} \right) \in
\SL_2(\Z)\mid a \equiv_m 1 , b \equiv_m 0  \text{ and } c \equiv_n
0 , d \equiv_n 1 \}.$$
By $\PG(m,n)$ we denote the image of $\Gamma(m,n)$ in $\PSL_2(\Z)$ under the natural projection. One of the main ingredients in our proofs is 
\begin{prop}\label{gamma10free}
Let $m,n \in \N$ such that $m \geq 3$, $n\mid m$ and $(m,n) \not= (3,1)$. Then $\Gamma(m,n)$ and $\PG(m,n)$ are free of rank 
$$1 + \frac{nm^2}{12}\prod_{\substack{p \mid m\\ p \ \mathrm{prime}}} \left(1- \frac{1}{p^2}\right).$$
\end{prop}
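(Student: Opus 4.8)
The plan is to transfer the whole question to $\PSL_2(\Z)$, establish freeness through the free-product structure, and then pin down the rank via an index computation and the rational Euler characteristic. First I would observe that for $m \geq 3$ the matrix $-I$ does not lie in $\Gamma(m,n)$, since membership would force $-1 \equiv 1 \pmod m$. Hence the restriction to $\Gamma(m,n)$ of the projection $\SL_2(\Z) \to \PSL_2(\Z)$ is injective, yielding a group isomorphism $\Gamma(m,n) \cong \PG(m,n)$; it therefore suffices to treat $\PG(m,n)$, as freeness and rank transfer verbatim. Recall that $\PSL_2(\Z) \cong \Z/2\Z * \Z/3\Z$, so by the Kurosh subgroup theorem every subgroup is a free product of a free group with conjugates of subgroups of the two finite factors. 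A torsion-free subgroup can contain no nontrivial such factor and is therefore free, so the problem reduces to showing that $\PG(m,n)$ is torsion-free and computing its index.

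For torsion-freeness the key observation is a trace congruence. If $A = \left(\begin{smallmatrix} a & b \\ c & d\end{smallmatrix}\right) \in \Gamma(m,n)$, then from $a \equiv 1$, $b \equiv 0 \pmod m$ together with $\det A = 1$ one gets $d \equiv ad - bc = 1 \pmod m$, so $\operatorname{tr} A \equiv 2 \pmod m$. Nontrivial torsion elements of $\PSL_2(\Z) \cong \Z/2\Z * \Z/3\Z$ have order $2$ or $3$, which forces $\operatorname{tr} A \in \{0, 1, -1\}$. For $m \geq 4$ none of these is $\equiv 2 \pmod m$, so $\PG(m,n)$ is torsion-free. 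The only remaining admissible pair is $(m,n) = (3,3)$, i.e.\ $\Gamma(m,n) = \Gamma(3)$; here $\operatorname{tr} A = -1 \equiv 2 \pmod 3$ is not excluded by the trace alone, and one rules out such an element by a finer congruence (writing $a = 1 + 3s$, $d = 1 + 3t$, the determinant forces $s+t \equiv 0 \pmod 3$, incompatible with $\operatorname{tr} A = 2 + 3(s+t) = -1$), or simply by the classical fact that $\Gamma(N)$ is torsion-free for $N \geq 3$. I expect this $m=3$ analysis to be the main obstacle; tellingly, it is precisely for the excluded pair $(3,1)$, where the condition on $c$ disappears, that an order-$3$ element such as $\left(\begin{smallmatrix} 1 & 3 \\ -1 & -2\end{smallmatrix}\right)$ reappears.

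Next I would compute the index $d := [\PSL_2(\Z) : \PG(m,n)]$. Starting from the principal congruence subgroup, $[\SL_2(\Z) : \Gamma(m)] = |\SL_2(\Z/m\Z)| = m^3 \prod_{p \mid m}(1 - p^{-2})$. Since the trace congruence shows every element of $\Gamma(m,n)$ already satisfies $a \equiv d \equiv 1$, $b \equiv 0 \pmod m$, reduction modulo $m$ carries $\Gamma(m,n)$ onto the cyclic group $\{\left(\begin{smallmatrix} 1 & 0 \\ \bar c & 1\end{smallmatrix}\right) : \bar c \in n\Z/m\Z\}$ of order $m/n$, with kernel exactly $\Gamma(m)$. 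Thus $[\Gamma(m,n):\Gamma(m)] = m/n$ and $[\SL_2(\Z):\Gamma(m,n)] = n m^2 \prod_{p \mid m}(1 - p^{-2})$. Because $-I \notin \Gamma(m,n)$, passing to $\PSL_2(\Z)$ halves the index, giving $d = \tfrac12 n m^2 \prod_{p \mid m}(1 - p^{-2})$.

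Finally I would read off the rank from the rational Euler characteristic $\chi$, which is multiplicative on finite-index subgroups. The free-product decomposition gives $\chi(\PSL_2(\Z)) = \tfrac12 + \tfrac13 - 1 = -\tfrac16$, whence $\chi(\PG(m,n)) = -d/6$. A free group of rank $r$ satisfies $\chi = 1 - r$, so
$$ r = 1 + \frac{d}{6} = 1 + \frac{n m^2}{12} \prod_{p \mid m}\left(1 - \frac{1}{p^2}\right), $$
and via the isomorphism $\Gamma(m,n) \cong \PG(m,n)$ the same rank holds for $\Gamma(m,n)$, completing the proof.
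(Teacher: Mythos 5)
Your proof is correct, but it takes a genuinely different route from the paper's. The paper bootstraps from known results: it quotes Frasch's theorem that $\PG(p)$ is free of explicitly known rank, deduces via the Schreier formula that $\Gamma(m)\cong\PG(m)$ is free for all $m\geq 3$ (treating $m=2^a$ separately, since $\Gamma(2)$ is not free), then proves $\PG^1(m)$ free for $m\geq 4$ by a case split --- an $A^p\equiv I \pmod p$ torsion argument when $m$ has a prime factor $p\geq 5$, and explicit Reidemeister--Schreier computations for the residual cases $m=4,6,9$ --- and finally gets Proposition~\ref{gamma10free} by Schreier applied to $\Gamma(m,n)\leq\Gamma^1(m)$, with the index formula cited from \cite{AR}. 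You instead prove torsion-freeness of $\PG(m,n)$ directly and uniformly: the trace congruence $\operatorname{tr}A\equiv 2\pmod m$ (using $d\equiv 1\pmod m$ via the determinant) kills orders $2$ and $3$ at once for all $m\geq 4$, and your mod-$9$ refinement settles $(m,n)=(3,3)$; you also derive the index $nm^2\prod_{p\mid m}(1-p^{-2})$ from scratch via $|\SL_2(\Z/m\Z)|$ and the surjection of $\Gamma(m,n)$ onto the cyclic group of order $m/n$, rather than citing \cite{AR}, and you extract the rank from the rational Euler characteristic $\chi(\PSL_2(\Z))=-\tfrac16$ instead of the Schreier formula. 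What your approach buys is uniformity and self-containedness: no appeal to Frasch, no hand or machine computations for small levels, and a pleasing explanation of the excluded case $(3,1)$ via the explicit order-$3$ element $\left(\begin{smallmatrix}1&3\\-1&-2\end{smallmatrix}\right)\in\Gamma(3,1)$. What the paper's route buys is that it simultaneously establishes freeness of the larger groups $\PG^1(m)$ (needed nowhere in your argument but of independent use), and it avoids invoking multiplicativity of the rational Euler characteristic --- the one point where you should add a line: either cite Wall's theorem, or note that multiplicativity for virtually free groups follows from Nielsen--Schreier by passing to a common finite-index free subgroup (e.g.\ normalize against $\PG(2)$, free of rank $2$ and index $6$, which gives $\chi=-\tfrac16$ without the free-product formula). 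Your appeal to Kurosh to conclude that a torsion-free subgroup of $\Z/2\Z * \Z/3\Z$ is free is the same structural input the paper uses to bound torsion orders.
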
 

In particular, for primes $p \geq 5$, the groups $\Gamma(p,1) \leq \SL_2(\Z)$ are free of rank $1+ \frac{1}{12}p^2(1-p^{-2})$ so that the rank of $\Gamma(p,1)$ grows quadratically in $p$. In contrast, for $n \geq 3$, one can show that the corresponding subgroups in $\SL_n(\Z)$ can always be generated by $n(n-1)$ matrices. See \cite[Lem.~6.1]{GL}.

\subsection{Related Results and Open Problems}\label{comments}
In \cite{GL} Grunewald and Lubotzky use the groups $\Gamma(G,\pi)$ to construct linear representations of the automorphism group $\Aut(F_n)$. In their concluding Section~9.4 they present, for some explicit $G$ of small order, the indices of the groups $\Gamma^+(G,\pi)$ in $\Aut^+(F_n)$ and also the abelianizations of the groups $\Gamma^+(G,\pi)$ which they obtain by MAGMA computations. Besides the case that $G$ is finite abelian, they also consider the case that $G = D_r$ is a dihedral group. Their observations are explained by another result of the author, which says 
$$\Gamma^+(D_r,\pi)^{\ab} \cong \begin{cases} 
	\Z/2\Z \times \Z^2, \quad r \mbox{ odd} \\ 
	\Z/2\Z \times \Z^3, \quad r \mbox{ even} \end{cases}.$$
The proof of this result is elementary but rather long. It shall therefore be postponed to the author's Ph.D.~Thesis.

Actually, Grunewald and Lubotzky present computational results for some more finite groups $G$, e.g., $G = A_5$. In all considered cases $\Gamma^+(G,\pi)$ has infinite abelianization. For $G$ non-perfect we now know by Theorem $\ref{main2}$ that $\Gamma^+(G,\pi)^{\ab}$ is infinite. However, our proof does not work for perfect groups $G$. Hence we state
\begin{con}
For every epimorphism $\pi:F_2 \rightarrow G$ onto a non-trivial finite group $G$ the group $\Gamma^+(G,\pi) \leq \Aut^+(F_2)$ has infinite abelianization.
\end{con}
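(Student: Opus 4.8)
The plan is to derive Theorem~\ref{main2} from Theorem~\ref{main1} by pushing $\pi$ forward to the abelianization of $G$ and then comparing abelianizations along a finite-index inclusion. Since $G$ is non-perfect, $A := G/G' = G^{\ab}$ is a nontrivial finite abelian group, and as a quotient of the $2$-generated group $G$ it is itself generated by two elements; hence $A \cong \Z/m\Z \times \Z/n\Z$ with $n \mid m$ and $m \geq 2$. Let $\psi : G \twoheadrightarrow A$ be the canonical projection and put $\pi' := \psi \circ \pi : F_2 \twoheadrightarrow A$, an epimorphism onto $A$.

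First I would establish the finite-index inclusion $\Gamma^+(G,\pi) \leq \Gamma^+(A,\pi')$. If $\varphi \in \Aut^+(F_2)$ fixes $\pi$, then $\pi'\varphi = \psi\pi\varphi = \psi\pi = \pi'$, so $\varphi$ fixes $\pi'$; this gives the inclusion. Since $\Gamma^+(G,\pi)$ already has finite index in $\Aut^+(F_2)$ and is contained in $\Gamma^+(A,\pi')$, the index $[\Gamma^+(A,\pi') : \Gamma^+(G,\pi)]$ is finite as well. Next I would apply Theorem~\ref{main1} to the abelian group $A$ and the epimorphism $\pi'$. Its hypotheses exhaust every nontrivial $2$-generated finite abelian group: the generic case $m \geq 3$, $(m,n) \neq (3,1)$ is covered by the displayed isomorphism, while the three remaining possibilities $A \in \{\Z/2\Z,\ \Z/3\Z,\ \Z/2\Z \times \Z/2\Z\}$ are covered by the explicit computations. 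In every case the free-abelian part of $\Gamma^+(A,\pi')^{\ab}$ has rank at least $1$, so $\Gamma^+(A,\pi')^{\ab}$ is infinite. As $\Gamma^+(A,\pi')$ is finitely generated (it has finite index in the finitely generated group $\Aut^+(F_2)$), this yields an epimorphism $f : \Gamma^+(A,\pi') \twoheadrightarrow \Z$.

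Finally I would transport infiniteness down to the subgroup. Restricting $f$ to $H := \Gamma^+(G,\pi)$, the image $f(H)$ is covered by at most $[\Gamma^+(A,\pi'):H]$ cosets of $f(H)$ in $\Z = f(\Gamma^+(A,\pi'))$, so $f(H)$ has finite index in $\Z$ and is therefore a nonzero subgroup $d\Z \cong \Z$. Thus $f|_H : H \twoheadrightarrow \Z$ is onto an infinite cyclic group, whence $\Gamma^+(G,\pi)^{\ab}$ surjects onto $\Z$ and is infinite, as claimed. Equivalently, one may invoke the transfer $\Gamma^+(A,\pi')^{\ab} \to H^{\ab}$, whose composite with the map induced by inclusion is multiplication by the index, so an infinite-order class in $\Gamma^+(A,\pi')^{\ab}$ forces one in $H^{\ab}$.

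The main point to get right is the direction of this last comparison: for a finite-index inclusion $H \leq K$ it is infiniteness of $K^{\ab}$ that descends to $H^{\ab}$, not conversely, so the argument must start from the larger, better-understood group $\Gamma^+(A,\pi')$ and move inward. The hypothesis that $G$ is non-perfect is used exactly once, to produce the nontrivial abelian quotient $A$ needed to invoke Theorem~\ref{main1}; for perfect $G$ no such quotient exists and this route collapses, which is precisely the gap left open by the conjecture.
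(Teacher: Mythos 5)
Your argument does not prove the statement it was aimed at. The statement here is the \emph{Conjecture}, which asserts infinite abelianization of $\Gamma^+(G,\pi)$ for \emph{every} non-trivial finite group $G$, perfect or not. Your proof uses the non-perfectness of $G$ in an essential way -- it is what produces the non-trivial abelian quotient $A = G^{\ab}$ to which Theorem~\ref{main1} is applied -- so what you have actually proved is Theorem~\ref{main2}, not the Conjecture. For a $2$-generated perfect group such as $G = A_5$ (for which epimorphisms $F_2 \rightarrow G$ certainly exist), one has $A = 1$ and $\Gamma^+(A,\pi') = \Aut^+(F_2)$, and the inclusion $\Gamma^+(G,\pi) \leq \Aut^+(F_2)$ yields no information; your closing paragraph concedes exactly this. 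This is not a repairable slip but the substance of the problem: the paper itself offers no proof of the Conjecture and poses it as open precisely because the non-perfect argument does not extend to perfect $G$. Note also that infiniteness of abelianization does \emph{not} in general descend from a finite-index overgroup unless that overgroup's abelianization is infinite, so there is no way to salvage the perfect case by choosing a different intermediate subgroup with finite abelianization.

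That said, the portion you do prove is correct, and it is a genuinely different route to Theorem~\ref{main2} than the paper's. Your reduction to $A \cong \Z/m\Z \times \Z/n\Z$, the verification that the cases of Theorem~\ref{main1} (generic case plus the three exceptional computations) exhaust all non-trivial $2$-generated finite abelian groups, and the finite-index descent -- either by the coset-covering argument showing $f(H)$ is a finite-index, hence non-zero, subgroup of $\Z$, or by the transfer identity that the composite of the transfer with the map induced by inclusion is multiplication by the index -- are all sound. The paper instead proves Theorem~\ref{main2} by passing to the image under $\bar\rho$ in $\PSL_2(\Z)$: for $G^{\ab} \cong \Z/2\Z$ (and similarly $\Z/3\Z$) it uses Rademacher's decomposition $\PG^0(2) \cong \Z * \Z/2\Z$ together with the Kurosh Subgroup Theorem and the unipotent element $\left(\begin{smallmatrix} 1 & 0 \\ k & 1 \end{smallmatrix}\right)$, $k = |G|$, to rule out a free product of involutions only; in the remaining cases it quotes freeness of the relevant congruence subgroups. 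Your derivation from Theorem~\ref{main1} is arguably cleaner and avoids the Kurosh analysis entirely, but it closes only the non-perfect case; the Conjecture as stated remains unproven by your argument.
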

The situation in the case $n \geq 3$ looks different. Indeed, Grunewald and Lubotzky show that for every epimorphism $\pi : F_n \rightarrow G$ from $F_n$, $n\geq 3$, onto a finite abelian group, the group $\Gamma(G,\pi) \leq \Aut(F_n)$ has finite abelianization. This is Proposition~8.5 in~\cite{GL}. Computational results \cite[Sec.~9.4]{GL} indicate that $\Gamma^+(G,\pi)$ always has finite abelianization if $n \geq 3$. This leads to

\begin{prb}
Does $\Gamma^+(G,\pi) \leq \Aut^+(F_n)$, where $n \geq 3$, have finite abelianization for every epimorphism $\pi : F_n \rightarrow G$ onto a non-trivial finite group $G$?
\end{prb}

Our work is related to results of Satoh \cite{SA1, SA2}. In his papers Satoh considers the kernel $T_{n,m}$ of the composition
$$\Aut(F_n) \stackrel{\rho}{\longrightarrow} \GL_n(\Z) \longrightarrow \GL_n(\Z/ m\Z).$$
One easily sees that for $m \geq 3$ we have $T_{n,m} = \Gamma^+( (\Z / m\Z)^n,\pi)$ where $\pi: F_n \rightarrow (\Z/m\Z)^n$ is the obvious epimorphism. Satoh shows that for $n,m \geq 2$ one has
$$T^{\ab}_{n,m} \cong (\IA_n^{\ab} \otimes_{\Z} \Z / m \Z) \times \Gamma_n(m)^{\ab}$$
where $\Gamma_n(m)$ is the kernel of the natural epimorphism $\GL_n(\Z) \rightarrow \GL_n(\Z/m\Z)$. Since $\IA_2$ is free of rank $2$, see \cite[Sec.~3.6, Cor.~N4]{MKS}, for $n = 2$ this reads
$$T^{\ab}_{2,m} \cong ( \Z / m \Z)^2 \times \Gamma_2(m)^{\ab}.$$
Observe that for $m\geq 3$ we have $\Gamma_2(m) = \Gamma(m,m) \leq \SL_2(\Z)$. This result therefore corresponds to our result in Theorem \ref{main1} for the special case $G = (\Z / m\Z)^2$. Satoh also gives the integral homology groups of $T_{2,p}$ for odd primes $p$. In particular, he shows that 
$$\h_1(T_{2,p},\Z) = (\Z / p\Z)^2 \times \Z^{1+12^{-1}p^3(1-p^{-2})}.$$
Since the first integral homology group is just the abelianization, this corresponds to our result in Theorem \ref{main1} for the special case $G = (\Z/p\Z)^2$.

\section{Congruence Subgroups of $\SL_2(\Z)$}
\subsection{Introduction}
Let $\pi : F_2 \rightarrow G$ be an epimorphism of the free group $F_2$ onto a finite group $G$. In order to understand the image $\rho(\Gamma^+(G,\pi))$ in $\SL_2(\Z)$, we introduce some families of finite index subgroups of $\SL_2(\Z)$.

Recall that for two natural numbers $m,n \in \N$ such that $n \mid m$ we define the group
$$\Gamma(m,n) = \{ \left(
\begin{smallmatrix} a & b \\ c & d\end{smallmatrix} \right) \in
\SL_2(\Z)\mid a \equiv_m 1 , b \equiv_m 0  \text{ and } c \equiv_n
0 , d \equiv_n 1 \}.$$ 
For $m \in \N$ the group $\Gamma(m) := \Gamma(m,m)$ is called the \emph{principal congruence subgroup of $\SL_2(\Z)$ of level $m$}. It is the kernel of the natural epimorphism $\SL_2(\Z) \rightarrow \SL_2(\Z/m\Z)$. A subgroup of $\SL_2(\Z)$ containing some $\Gamma(m)$ is called a \emph{congruence subgroup of} $\SL_2(\Z)$. It is a classical result that not all finite index subgroups of $\SL_2(\Z)$ are congruence subgroups. See for example \cite{GJ}. 

In \cite{AR} it is shown that 
\begin{equation}\label{index}
[\SL_2(\Z) : \Gamma(m,n)] = n m^2 \prod_{p \mid m} \left(1-\frac{1}{p^2}\right)
\end{equation}
where the product runs over all primes $p$ dividing $m$.

It is convenient to use the following notation.
\begin{align*}
\Gamma^0(m)  &:= \{ A \in \SL_2(\Z) \mid A \equiv \left( \begin{smallmatrix} * & 0 \\ * & *\end{smallmatrix} \right) \mod m \},\\
\Gamma^{1}(m)&:= \{ A \in \SL_2(\Z) \mid A \equiv \left( \begin{smallmatrix} 1 & 0 \\ * & *\end{smallmatrix} \right) \mod m \}.
\end{align*}
All the above subgroups of $\SL_n(\Z)$ are clearly congruence subgroups. 

We denote the images in $\PSL_2(\Z)$ of $\Gamma^0(m)$, $\Gamma^1(m)$ and $\Gamma(m)$ under the natural projection $\SL_2(\Z) \rightarrow \PSL_2(\Z)$ by $\PG^0(m)$, $\PG^{1}(m)$ and $\PG(m)$, respectively.

\subsection{Free Congruence Subgroups}
In \cite{FR} H.~Frasch gives the following description of the groups $\PG(p)$ for $p$ prime. 

\begin{thm}[Frasch]\label{frasch}
Let $p \geq 3$ be a prime. Then $\PG(p)$ is free of rank $1 + \frac{1}{12} p^3 (1 - p^{-2})$. Moreover $\PG(2)$ is free of rank $2$.
\end{thm}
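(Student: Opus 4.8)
The plan is to realize $\Gamma(m,n)$ as a torsion-free finite-index subgroup of $\PSL_2(\Z)$ and then read off both its freeness and its rank from the structure of $\PSL_2(\Z)$ as a free product. Since $m \geq 3$, the matrix $-I$ does not satisfy $a \equiv_m 1$, so $-I \notin \Gamma(m,n)$; hence the projection $\SL_2(\Z) \to \PSL_2(\Z)$ restricts to an isomorphism $\Gamma(m,n) \cong \PG(m,n)$, and it suffices to treat $\PG(m,n)$. Now $\PSL_2(\Z) \cong \Z/2\Z * \Z/3\Z$, and by the Kurosh subgroup theorem every subgroup of this free product is itself a free product of a free group with conjugates of subgroups of the two finite factors. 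A torsion-free subgroup can contain no nontrivial conjugate of $\Z/2\Z$ or $\Z/3\Z$, so it is free. Thus, once torsion-freeness is established, both $\PG(m,n)$ and $\Gamma(m,n)$ are free.

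The crux, and the step where the hypotheses $m \geq 3$ and $(m,n) \neq (3,1)$ are used, is to verify that $\Gamma(m,n)$ is torsion-free. First I would observe that the determinant condition sharpens the congruences: if $A = \left(\begin{smallmatrix} a & b \\ c & d\end{smallmatrix}\right) \in \Gamma(m,n)$, then reducing $ad - bc = 1$ modulo $m$ and using $a \equiv_m 1$, $b \equiv_m 0$ gives $d \equiv_m 1$ as well, whence $\operatorname{tr}(A) = a + d \equiv 2 \pmod m$. A nontrivial element of finite order in $\SL_2(\Z)$ has trace in $\{-2,-1,0,1\}$ (trace $2$ forces the identity). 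Matching each possibility against $\operatorname{tr}(A) \equiv 2 \pmod m$ with $m \geq 3$: trace $0$ and trace $1$ are impossible; trace $-2$ would force $m = 4$ and $A = -I$, which is excluded since $-I \notin \Gamma(m,n)$; trace $-1$ forces $m = 3$ and an element of order $3$. This last case only survives when $m = 3$, where $n \in \{1,3\}$. For $n = 3$ we have $\Gamma(3,3) = \Gamma(3)$, which is torsion-free by the classical theorem of Minkowski. For $n = 1$ one checks directly that $\Gamma(3,1)$ does contain elements of order $3$, for instance $\left(\begin{smallmatrix} 1 & 3 \\ -1 & -2\end{smallmatrix}\right)$, which is exactly why the pair $(3,1)$ must be excluded.

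With freeness in hand, I would compute the rank from the Euler characteristic. The group $\PSL_2(\Z)$ has rational Euler characteristic $\chi(\PSL_2(\Z)) = \tfrac12 + \tfrac13 - 1 = -\tfrac16$, while $\chi$ is multiplicative on finite-index subgroups and $\chi(H) = 1 - r$ for a free group $H$ of rank $r$. Hence $r = 1 + \tfrac16 [\PSL_2(\Z) : \PG(m,n)]$. Because $-I \notin \Gamma(m,n)$, the index drops by a factor of $2$ under projection, so by the index formula (\ref{index}) we get $[\PSL_2(\Z) : \PG(m,n)] = \tfrac12 [\SL_2(\Z) : \Gamma(m,n)] = \tfrac12 n m^2 \prod_{p \mid m}(1 - p^{-2})$. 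Substituting yields the claimed rank $1 + \tfrac{n m^2}{12}\prod_{p\mid m}(1 - p^{-2})$, and since $\Gamma(m,n) \cong \PG(m,n)$ the same value holds for $\Gamma(m,n)$. I expect the torsion-freeness analysis to be the main obstacle: the freeness-plus-rank machinery is standard, and indeed Frasch's Theorem~\ref{frasch} is the special case $m = n = p$, whereas isolating $(3,1)$ as the unique exceptional pair requires the sharpened trace congruence and a careful case check.
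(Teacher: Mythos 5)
Your proposal does not follow the paper at all --- and it could not, because the paper offers no proof of Theorem~\ref{frasch}: it is quoted verbatim from Frasch~\cite{FR} and then used as the \emph{base case} from which the paper bootstraps Lemma~\ref{free1}, Lemma~\ref{free2} and Proposition~\ref{gamma10free} via the Schreier formula and some Reidemeister--Schreier computations. You run the logic in the opposite direction: you prove the general Proposition~\ref{gamma10free} directly --- torsion-freeness of $\Gamma(m,n)$ from the sharpened trace congruence $\operatorname{tr}(A) \equiv 2 \pmod m$, freeness from the Kurosh Subgroup Theorem, rank from $\chi(\PSL_2(\Z)) = \tfrac12 + \tfrac13 - 1 = -\tfrac16$ together with the index formula~(\ref{index}) --- and then specialize to $m = n = p$. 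The trace bookkeeping is correct: the determinant forces $d \equiv 1 \pmod m$, torsion traces lie in $\{-2,-1,0,1\}$, so only $m \in \{3,4\}$ can carry torsion, $m=4$ is killed because trace $-2$ of finite order forces $-I \notin \Gamma(4,n)$, and your matrix $\left(\begin{smallmatrix} 1 & 3 \\ -1 & -2 \end{smallmatrix}\right)$ really is an order-$3$ element of $\Gamma(3,1)$, correctly isolating the exceptional pair. This route is self-contained apart from the appeal to Minkowski for $\Gamma(3)$, and even that is avoidable: a trace $-1$ element of $\Gamma(3)$ would have $ad = 1 + bc \equiv 1 \pmod 9$, whereas $a = 1+3\alpha$, $d = 1+3\delta$ with $\alpha + \delta = -1$ gives $ad \equiv -2 \pmod 9$, a contradiction. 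What your approach buys is considerable: it makes the citation of Frasch's generators unnecessary for $p \geq 3$ and replaces the paper's case-by-case verifications for $\PG^1(4)$, $\PG^1(6)$, $\PG^1(9)$ in Lemma~\ref{free2} by one uniform argument.

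There is, however, a genuine gap relative to the statement you were asked to prove: the clause ``$\PG(2)$ is free of rank $2$'' is never addressed, and your machinery breaks precisely there. For $m = 2$ the congruence only gives $\operatorname{tr}(A) \equiv 0 \pmod 2$, which excludes the order-$3$ traces $\pm 1$ but not trace $0$; moreover $-I \in \Gamma(2)$, so the isomorphism $\Gamma(m,n) \cong \PG(m,n)$ fails and $\Gamma(2)$ itself is \emph{not} torsion-free (as the paper notes, it is the direct product of a free group of rank $2$ with $\langle -I \rangle$). The repair is short but must be supplied: since $-I \in \Gamma(2)$, the full preimage of $\PG(2)$ is $\Gamma(2)$, so a nontrivial torsion element of $\PG(2)$ would lift to a matrix in $\Gamma(2)$ of trace $0$ (trace $\pm 1$ being odd is excluded); but trace $0$ forces $-a^2 - bc = 1$ with $a$ odd and $b,c$ even, i.e.\ $bc \equiv 0 \pmod 4$ while $-a^2 - 1 \equiv 2 \pmod 4$, a contradiction. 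Hence $\PG(2)$ is torsion-free, so free by your Kurosh argument, and since $[\PSL_2(\Z) : \PG(2)] = \lvert \PSL_2(\Z/2\Z)\rvert = 6$, your Euler-characteristic computation gives rank $1 + \tfrac{6}{6} = 2$. With this paragraph added, your proof covers the entire statement.
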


We shall now generalize his result. Consider the natural projection $\SL_2(\Z) \rightarrow \PSL_2(\Z)$. By definition it maps $\Gamma(m)$ onto $\PG(m)$. For $m \geq 3$ the kernel $\langle \left(\begin{smallmatrix} -1 & 0\\ 0 & -1 \end{smallmatrix} \right) \rangle$ of this projection has trivial intersection with $\Gamma(m)$. Hence we obtain an isomorphism
$$\Gamma(m) \stackrel{\cong}{\longrightarrow} \PG(m).$$
Note that this argument does not work for the case $m = 2$. Indeed, in contrast to $\PG(2)$, the group $\Gamma(2)$ is not free but the direct product of a rank $2$ free group and a cyclic group of order $2$. This is the reason for the third exceptional case in Theorem~\ref{main1}

\begin{lem}\label{free1}
Let $m \geq 3$. Then $\Gamma(m) \cong \PG(m)$ is free of rank
$$1 + \frac{m^3}{12} \prod_{p \mid m} \left( 1 - \frac{1}{p^2} \right)$$
where the product runs over all primes $p$ dividing $m$.
\end{lem}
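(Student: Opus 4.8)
The plan is to deduce the lemma from Frasch's theorem (Theorem~\ref{frasch}) by a subgroup-index argument. The isomorphism $\Gamma(m) \cong \PG(m)$ for $m \geq 3$ has already been established in the paragraph preceding the statement, so it remains only to show that $\PG(m)$ is free of the asserted rank. First I would fix a prime $p$ dividing $m$. Since $A \equiv I \pmod m$ forces $A \equiv I \pmod p$, we have $\Gamma(m) \subseteq \Gamma(p)$, and applying the projection $\SL_2(\Z) \to \PSL_2(\Z)$ gives $\PG(m) \leq \PG(p)$. By Frasch's theorem $\PG(p)$ is free of finite rank, so by the Nielsen--Schreier theorem its finite-index subgroup $\PG(m)$ is free as well; this disposes of the freeness assertion with no further work, and is the point where the choice to pass to $\PSL_2(\Z)$ pays off, since $\Gamma(2)$ itself is not free.

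To pin down the rank I would use that for free groups the quantity $\mathrm{rank}-1$ scales with the index (Schreier's index formula): if $H \leq K$ with $K$ free of finite rank and $[K:H] < \infty$, then $\mathrm{rank}(H) - 1 = [K:H]\,\bigl(\mathrm{rank}(K) - 1\bigr)$. Applied to $\PG(m) \leq \PG(p)$ this reduces everything to computing the index $[\PG(p):\PG(m)]$ and inserting Frasch's value $\mathrm{rank}(\PG(p)) - 1 = \tfrac{1}{12}p^3(1 - p^{-2})$ for $p \geq 3$, respectively $\mathrm{rank}(\PG(2)) - 1 = 1$. I would compute the index as a quotient of indices in $\PSL_2(\Z)$, namely $[\PG(p):\PG(m)] = [\PSL_2(\Z):\PG(m)] / [\PSL_2(\Z):\PG(p)]$, where each $\PSL_2$-index is read off from \eqref{index} with $n$ equal to the modulus, giving $[\SL_2(\Z):\Gamma(\ell)] = \ell^3\prod_{q\mid\ell}(1 - q^{-2})$.

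The one point requiring care --- and the only real obstacle I foresee --- is the passage from $\SL_2$-indices to $\PSL_2$-indices, since $-I$ lies in $\Gamma(\ell)$ exactly when $\ell = 2$. For $\ell \geq 3$ one has $[\PSL_2(\Z):\PG(\ell)] = \tfrac12[\SL_2(\Z):\Gamma(\ell)]$, whereas $[\PSL_2(\Z):\PG(2)] = [\SL_2(\Z):\Gamma(2)] = 6$. As the target modulus $m \geq 3$ is never $2$, the factor $\tfrac12$ always enters for $\PG(m)$; for the base one then checks both cases: when the chosen prime $p$ is odd the same factor $\tfrac12$ enters for $\PG(p)$ and the term $p^3(1-p^{-2})$ cancels against Frasch's rank, leaving $\mathrm{rank}(\PG(m)) - 1 = \tfrac{1}{12}m^3\prod_{q\mid m}(1-q^{-2})$; when $m = 2^k$ is forced to use $p = 2$ the factor does not cancel, but Frasch's value $\mathrm{rank}(\PG(2))-1 = 1$ makes the arithmetic close up to the same answer. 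In every case this yields $\mathrm{rank}(\PG(m)) = 1 + \tfrac{m^3}{12}\prod_{q\mid m}(1-q^{-2})$, as claimed. (As a uniform alternative to this bookkeeping one could instead invoke the rational Euler characteristic, using $\chi(\PSL_2(\Z)) = -\tfrac16$ and its multiplicativity under finite index together with $\chi(F_r) = 1 - r$, which sidesteps the odd/even case split entirely.)
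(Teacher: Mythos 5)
Your proposal is correct and follows essentially the same route as the paper: both deduce freeness and rank from Frasch's theorem via the Nielsen--Schreier theorem and the Schreier index formula, using the index formula \eqref{index} and splitting into the case of an odd prime factor versus $m$ a power of $2$. The only cosmetic difference is that you stay in $\PSL_2(\Z)$ and track the $\pm I$ factor of $\tfrac12$ explicitly (where the paper instead verifies $[\PG(2):\PG(4)]=4$ and then computes inside $\SL_2(\Z)$ from $\Gamma(4)$ onward), which if anything makes the bookkeeping behind the paper's ``one can verify'' step more explicit.
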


\begin{proof}
Observe that for $m_1, m_2 \in \N$ such that $m_1 \mid m_2$ we have $\Gamma(m_2) \leq \Gamma(m_1)$. Accordingly, the main point in our proof is that a subgroup of a free group is again free and its rank is given by the Schreier Formula \cite[Thm.~2.10]{MKS}. Since $\Gamma(2)$ is not free, we consider two cases.\smallskip\\
Case 1: We have $m = 2^a$ for some $a \geq 2$. One can verify that $\PG(4)$ has index $4$ in $\PG(2)$. Since the latter group is free of rank $2$, we find that $\PG(4)$ is free of rank $5$ and hence so is $\Gamma(4)$. From (\ref{index}) we know that $\Gamma(2^a)$ has index $2^{3a-6}$ in $\Gamma(4)$. We thus find that $\Gamma(2^a)$ is free of rank $1 + \frac{1}{12}2^{3a}(1 - 2^{-2})$, as claimed.\smallskip\\
Case 2: We have $p_0 \mid m$ for some prime $p_0 > 2$. Again by (\ref{index}), we see that
$$[\Gamma(p_0) : \Gamma(m)] = \frac{m^3}{p_0^3} \prod_{\substack{p \mid m\\ p \not= p_0}} \left(1 - \frac{1}{p^2}\right).$$
Since, by Proposition~\ref{frasch}, $\Gamma(p_0)$ is free of rank $1 + \frac{1}{12}p_0^3(1 - p_0^{-2})$, we obtain the desired result.
\end{proof}

We next wish to generalize this result even further to the groups $\Gamma(m,n)$ where $m \geq 3$, $n \mid m$ and $(m,n) \not= (3,1)$. The first step is, of course, to show that these groups are free. Here we use the well-known description of $\PSL_2(\Z)$ as a free product
$$\PSL_2(\Z) = \left\langle \begin{pmatrix} 0 & 1 \\ -1 & 0 \end{pmatrix} \right\rangle * \left\langle \begin{pmatrix} 0 & -1 \\ 1 & 1 \end{pmatrix}  \right\rangle$$
where the first factor has order~$2$ and the second one has order~$3$. From the Kurosh Subgroup Theorem \cite[Cor.~4.9.1]{MKS} it follows that every non-trivial element of finite order in $\PSL_2(\Z)$ has either order~$2$ or~$3$.

Using this observation and considering some minimal cases, we obtain
\begin{lem}\label{free2}
Let $m \geq 4$, then $\PG^{1}(m)$ is a free group.
\end{lem}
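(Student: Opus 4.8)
The plan is to show that $\PG^1(m)$ contains no nontrivial element of finite order; by the Kurosh Subgroup Theorem applied to the free product $\PSL_2(\Z) = \Z/2\Z * \Z/3\Z$, any torsion-free subgroup of $\PSL_2(\Z)$ is free, so this suffices. By the observation preceding the lemma, every nontrivial element of finite order in $\PSL_2(\Z)$ has order $2$ or $3$, so it is enough to exclude elements of exactly these two orders.

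First I would translate the torsion condition into a statement about traces. A finite-order element $A \in \SL_2(\Z)$ has eigenvalues that are roots of unity, so $\operatorname{tr}(A) = \lambda + \lambda^{-1} \in \{-2,-1,0,1,2\}$, and the values $\pm 2$ force $A = \pm I$. Inspecting the standard generators $S = \begin{pmatrix} 0 & 1 \\ -1 & 0 \end{pmatrix}$ and $T = \begin{pmatrix} 0 & -1 \\ 1 & 1 \end{pmatrix}$ then confirms that a lift to $\SL_2(\Z)$ of an order-$2$ element of $\PSL_2(\Z)$ has trace $0$ (for then $A^2 = -I$), while a lift of an order-$3$ element has trace $\pm 1$ (for then $A^3 = \pm I$). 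Thus a nontrivial torsion element $\bar A \in \PG^1(m)$ would admit an $\SL_2(\Z)$-lift $A$ with $\operatorname{tr}(A) \in \{-1,0,1\}$.

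Next I would exploit the congruences defining $\Gamma^1(m)$. If $A = \begin{pmatrix} a & b \\ c & d \end{pmatrix} \in \Gamma^1(m)$, then $a \equiv 1$ and $b \equiv 0 \pmod m$, and reducing $\det A = ad - bc = 1$ modulo $m$ yields $d \equiv 1 \pmod m$ as well; hence $\operatorname{tr}(A) = a + d \equiv 2 \pmod m$. Since the two $\SL_2(\Z)$-lifts of an element of $\PG^1(m)$ differ by $-I$ and $\operatorname{tr}(-A) = -\operatorname{tr}(A)$, every lift of an element of $\PG^1(m)$ has trace congruent to $\pm 2$ modulo $m$.

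Finally I would combine the two trace restrictions. A nontrivial torsion element of $\PG^1(m)$ would require $t \equiv \pm 2 \pmod m$ for some $t \in \{-1,0,1\}$, i.e. one of the congruences $0 \equiv \pm 2$ or $\pm 1 \equiv \pm 2 \pmod m$; each of these forces $m \mid 1$, $m \mid 2$, or $m \mid 3$ and is therefore impossible once $m \geq 4$. Hence $\PG^1(m)$ is torsion-free and thus free. The argument is largely book-keeping; the only point demanding care — and presumably the source of the remark about ``considering some minimal cases'' — is the passage between $\SL_2(\Z)$ and $\PSL_2(\Z)$, where one must keep track of both lifts $\pm A$ and verify that the small moduli $m \in \{2,3\}$ are precisely the ones admitting torsion.
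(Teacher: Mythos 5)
Your argument is correct, and it takes a genuinely different route from the paper. The paper splits into cases: when $m$ has a prime factor $p \geq 5$ it observes $\PG^1(m) \leq \PG^1(p)$ and shows $\PG^1(p)$ is torsion-free by a power argument ($A \equiv \left(\begin{smallmatrix} 1 & 0 \\ k & 1 \end{smallmatrix}\right) \bmod p$ forces $A^p \equiv I \bmod p$, whence either the order of $A$ divides $p$, impossible for order $2$ or $3$, or $A^p$ is a nontrivial torsion element of the free group $\PG(p)$ of Frasch's theorem); the remaining moduli are then reduced to the minimal cases $m = 4, 6, 9$, which the paper settles by explicit Reidemeister--Schreier computations. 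Your trace argument replaces all of this with a single uniform computation: a nontrivial torsion element of $\PSL_2(\Z)$ lifts to $A \in \SL_2(\Z)$ with $\operatorname{tr}(A) \in \{-1,0,1\}$ (your eigenvalue bookkeeping for $A^2 = -I$ and $A^3 = \pm I$ checks out, since $A^2 = I$ with $A \neq \pm I$ would force $\det A = -1$), while the congruences $a \equiv 1$, $b \equiv 0 \bmod m$ together with $ad - bc = 1$ give $d \equiv 1 \bmod m$, so both lifts $\pm A$ of an element of $\PG^1(m)$ have trace $\equiv \pm 2 \bmod m$; the incompatibility for $m \geq 4$ is immediate. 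Your approach buys uniformity (no case split, no machine computation for $m = 4,6,9$) and independence from Frasch's theorem, which the paper's proof invokes; it also explains conceptually why exactly $m \in \{2,3\}$ are the exceptional moduli, matching the paper's observation that $\PG^1(2) \cong \Z * \Z/2\Z$ and $\PG^1(3) \cong \Z * \Z/3\Z$ are not free. One small correction to your closing remark: the paper's phrase about ``considering some minimal cases'' refers to its computational treatment of $m = 4, 6, 9$, not to the passage between $\SL_2(\Z)$ and $\PSL_2(\Z)$ --- but in your proof that passage is indeed the only delicate point, and you handle it correctly by tracking both lifts $\pm A$.
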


\begin{proof}
First we consider the case that $m$ has a prime factor $p \geq 5$. Note that it suffices to show that $\PG^1(p)$ is free. To this end, we show that $\PG^1(p)$ does not contain a non-trivial element of finite order. Then the Kurosh Subgroup Theorem yields the desired result. Assume that $A \in \PG^1(p)$ is a non-trivial element of finite order. By definition of $\PG^1(p)$ we have
$$ A \equiv \begin{pmatrix} 1 & 0 \\ k & 1\end{pmatrix} \mod p$$
for some $0 \leq k \leq p-1$. It follows that
$$ A^p \equiv \begin{pmatrix} 1 & 0 \\ 0 & 1\end{pmatrix} \mod p.$$
Now we consider two cases.\smallskip\\
Case 1: $A^p = 1$. Then the order of $A$ divides $p$ and we have a contradiction, since $A$ has either order $2$ or $3$.\smallskip\\
Case 2: $A^p \not= 1$. Then $A^p$ is a non-trivial element of $\PG(p)$, which is, by Theorem \ref{frasch}, a free group. Hence $A^p$ does not have finite order, contradiction.

To prove the result for $m$ not having a prime factor $\geq 5$, it suffices to consider the cases where $m= 4,6,9$. By an explicit computation, using the Reidemeister-Schreier Method, one verifies that $\PG^1(m)$ is also free in these cases.
\end{proof}

From Lemmas \ref{free1}, \ref{free2}, the formulas (\ref{index}) and the Schreier Formula, we obtain Proposition \ref{gamma10free}.

H.~Rademacher \cite{RA} gives a very explicit description of the groups $\PG^0(p)$ for $p$ prime. Observe that for $p \in \{ 2,3 \}$ the groups $\PG^0(p)$ and $\PG^1(p)$ coincide. By two examples of Rademacher we have
\begin{align*}
\PG^1(2) &\cong \Z * \Z / 2 \Z\\
\PG^1(3) &\cong \Z * \Z / 3 \Z.
\end{align*}
In particular, $\PG^1(2)$ and $\PG^1(3)$ are not free. This is the reason for the first two exceptional cases in Theorem \ref{main1}.

\section{Proofs of the Main Results}
\subsection{Proof of Theorem \ref{main1}}
Let $G = \Z / m\Z \times \Z / n \Z$ where $m \geq 3$, $n \mid m$ and $(m,n) \not= (3,1)$. Then, by \cite[Lem.~3.1]{AR}, up to conjugation, the group $\Gamma^+(G,\pi)$ only depends on $G$ but not on the particular choice of the epimorphism $\pi : F_2 \rightarrow G$. We may thus suppose that $\pi(x) = (1,0)$ and $\pi(y) = (0,1)$. We have an exact sequence
$$1 \longrightarrow \IA_2 \longrightarrow \Gamma^+(G,\pi) \stackrel{\rho}{\longrightarrow} \Gamma(m,n) \longrightarrow 1.$$
By Proposition \ref{gamma10free}, the group $\Gamma(m,n)$ is free of rank
$$r := 1 + \frac{nm^2}{12} \prod_{p \mid m} \left( 1 - \frac{1}{p^2} \right).$$
Let $\{M_i = \left(\begin{smallmatrix} a_i & b_i \\ c_i & d_i  \end{smallmatrix}\right)\mid  1 \leq i \leq r\}$ be a set of free generators of $\Gamma(m,n)$ and write $F_2 = \langle x,y \rangle$. Moreover, let $\varphi_i \in \Aut^+(F_2)$ such that $\rho(\varphi_i) = M_i$, that is
\begin{equation}\label{koeff}
\varphi_i(x) \equiv a_i x + c_i y,\quad \varphi_i(y) \equiv b_i x + d_i y \mod F'_2
\end{equation}
where $F'_2$ denotes the commutator subgroup of $F_2$. By a classical result of J.~Nielsen \cite[Sec.~3.6,~Cor.~N4]{MKS} the group $\IA_2$ is free on $\alpha_x, \alpha_y$, the inner automorphisms given by conjugation with $x$ and $y$, respectively. Now a result of P.~Hall, see \cite[Ch.~13, Thm.~1]{DJ}, yields that $\Gamma^+(G,\pi)$ admits a presentation
$$\langle \alpha_x, \alpha_y, \varphi_1, \dots, \varphi_r \mid \varphi_i \alpha_x \varphi_i^{-1} = w_i,\ \varphi_i \alpha_y \varphi_i^{-1} = v_i \mbox{ for } 1 \leq i \leq r\rangle$$
where the $w_i$ and $v_i$ are suitable words in $\alpha_x, \alpha_y$. We have 
$$\varphi_i \alpha_x \varphi_i^{-1} = \alpha_{\varphi_i(x)},\quad \varphi_i \alpha_y \varphi_i^{-1} = \alpha_{\varphi_i(y)}$$
for $1 \leq i \leq r$. Hence, from (\ref{koeff}) it follows that
\begin{equation}\label{rel01}
\overline{\alpha_x} = a_i \overline{\alpha_x} + c_i \overline{\alpha_y},\quad \overline{\alpha_y} = b_i \overline{\alpha_x} + d_i \overline{\alpha_y}
\end{equation}
in the abelianization of $\Gamma^+(G,\pi)$. This yields that $\Gamma^+(G,\pi)^{\ab}$ is the abelian group generated by $\overline{\alpha_x}, \overline{\alpha_y}$ and $\overline{\varphi_i}, 1 \leq i \leq r$, subject to the relations (\ref{rel01}). Observe that $\left( \begin{smallmatrix} 1 & 0 \\ n & 1 \end{smallmatrix} \right) \in \Gamma(m,n)$. Hence this matrix is a product of the $M_i$ and one consequence of the relations (\ref{rel01}) is
$$\overline{\alpha_x} = \overline{\alpha_x} + n\overline{\alpha_y}.$$
We thus find that $n\overline{\alpha_y} = 0$. Similarly we find that $m\overline{\alpha_x} = 0$. Obviously we can rewrite the defining relations (\ref{rel01}) as
\begin{equation}\label{rel02}
(a_i - 1)\overline{\alpha_x} = c_i\overline{\alpha_y}, \quad (1-d_i)\overline{\alpha_y} = b_i\overline{\alpha_x}.
\end{equation}
By definition of $\Gamma(m,n)$ we have
$$(a_i-1) \equiv b_i \equiv 0 \mod m, \quad (1-d_i) \equiv c_i \equiv 0 \mod n$$
for $1 \leq i \leq r$ so that all relations in (\ref{rel02}) are consequences of $m\overline{\alpha_x}=0$ and $n\overline{\alpha_y} = 0$. Hence we obtain a presentation
$$\Gamma^+(G,\pi)^{\ab} = \langle \overline{\alpha_x}, \overline{\alpha_y}, \overline{\varphi_1},\dots, \overline{\varphi_r} \mid \mbox{abelian},\ m\overline{\alpha_x}= 0,\ n\overline{\alpha_y}= 0 \rangle.$$
This proves $\Gamma^+(G,\pi)^{\ab} \cong G \times \Z^r$. The three special cases can be obtained by an explicit computation.

\subsection{Proof of Theorem \ref{main2}}
Let $G$ be a finite non-perfect group. First we consider the case that $G/G' \cong \Z / 2\Z$ where $G'$ denotes the commutator subgroup of $G$. Then we naturally obtain an epimorphism
$$\bar\pi: F_2 \stackrel{\pi}{\longrightarrow} G \longrightarrow \Z / 2\Z.$$
One easily verifies that
\begin{equation}\label{sub}
\Gamma^+(G,\pi) \leq \Gamma^+(\Z / 2\Z, \bar\pi).
\end{equation}
By \cite[Lem.~3.1]{AR} there is some $\varphi \in \Aut^+(F_2)$ such that $\bar\pi\varphi(x) = 1$ and $\bar\pi\varphi(y) = 0$. Observe that $\Gamma^+(G,\pi\varphi) = \varphi^{-1}\Gamma^+(G,\pi)\varphi$. We may therefore assume that $\bar\pi(x) = 1$ and $\bar\pi(y)=0$.  We set
$$\bar \rho : \Aut^+(F_2) \stackrel{\rho}{\longrightarrow} \SL_2(\Z) \longrightarrow \PSL_2(\Z)$$
where the second epimorphism is the natural projection. Note that $\bar\rho$ is onto. Since $\bar\rho$ induces an epimorphism $\Gamma^+(G,\pi)^{\ab} \rightarrow \bar\rho(\Gamma^+(G,\pi))^{\ab}$, it suffices to show that $\bar\rho(\Gamma^+(G,\pi))$ has infinite abelianization. By (\ref{sub}) we have 
$$\bar\rho(\Gamma^+(G,\pi)) \leq \bar\rho(\Gamma^+(\Z / 2\Z,\bar\pi)) = \PG^1(2).$$ 
Hence $\bar\rho(\Gamma^+(G,\pi))$ is a finite index subgroup of $\PG^1(2)$. Note that $\PG^1(2) = \PG^0(2)$. By an example of Rademacher \cite[Sec.~8]{RA}, we have
$$\PG^0(2) = \left\langle \begin{pmatrix} 1 & 0 \\ -1 & 1 \end{pmatrix} \right\rangle * \left\langle \begin{pmatrix} 1 & -2 \\ 1 & -1 \end{pmatrix} \right\rangle$$
where the the first factor is infinite cyclic and the second one has order~ $2$. The Kurosh Subgroup Theorem yields that $\bar\rho(\Gamma^+(G,\pi))$ is the free product of
\begin{itemize}
\item[(i)] a possibly trivial free group,
\item[(ii)] certain subgroups of conjugates of $\left\langle \left( \begin{smallmatrix} 1 & 0 \\ -1 & 1 \end{smallmatrix} \right) \right\rangle$,
\item[(iii)] certain conjugates of $\left\langle \left( \begin{smallmatrix} 1 & -2 \\ 1 & -1 \end{smallmatrix} \right) \right\rangle$.
\end{itemize}
We shall prove that a free factor of type (i) or (ii) actually appears. For a contradiction, let us assume that $\bar\rho(\Gamma^+(G,\pi))$ is the free product of factors of type (iii) only. Then $\bar\rho(\Gamma^+(G,\pi))$ is generated by elements of order $2$ and $\bar\rho(\Gamma^+(G,\pi))^{\ab} \cong (\Z / 2\Z)^m$ for some $m \in \N$. Let $k$ be the order of $G$. Then the automorphism
$\varphi \in \Aut^+(F_2)$ given by 
$$\varphi(x) = xy^k,\ \varphi(y) = y$$
is an element of $\Gamma^+(G,\pi)$. Hence 
$$M := \begin{pmatrix}1 & 0 \\ k & 1 \end{pmatrix} \in \bar\rho(\Gamma^+(G,\pi)).$$
 Since $M \in \left\langle \left( \begin{smallmatrix} 1 & 0 \\ -1 & 1 \end{smallmatrix} \right) \right\rangle$, one easily sees that the image of $M$ in $\PG^0(2)^{\ab}$ has infinite order. On the other hand, the image of $M$ in $\bar\rho(\Gamma(G,\pi))^{\ab}$ must have finite order. Observe that the inclusion map $\bar\rho(\Gamma^+(G,\pi)) \hookrightarrow \PG^0(2)$ induces a homomorphism $\bar\rho(\Gamma^+(G,\pi))^{\ab} \rightarrow \PG^0(2)^{\ab}$ such that the following diagram commutes.
$$ \xymatrix{\bar\rho(\Gamma^+(G,\pi)) \ar[r] \ar[d]&  \PG^0(2) \ar[d]\\
             \bar\rho(\Gamma^+(G,\pi))^{\ab} \ar[r] & \PG^0(2)^{\ab}}$$
This implies that the image of $M$ in $\PG^0(2)^{\ab}$ has finite order, contradiction.

The proof for $G^{\ab} = \Z / 3 \Z$ is almost the same. In the remaining cases, we find that $\bar\rho(\Gamma^+(G,\pi))$ is a finite index subgroup of a free subgroup of $\PSL_2(\Z)$. In particular, it has infinite abelianization and so must have $\Gamma^+(G,\pi)$.

\bibliographystyle{plain}

\end{document}